\documentclass[letterpaper, 10 pt, conference]{ieeeconf}
\IEEEoverridecommandlockouts

\usepackage{cite}
\usepackage{float}
\usepackage{amsmath,amssymb,amsfonts}
\usepackage{mathrsfs}
\usepackage{graphicx}
\usepackage{textcomp}
\usepackage[ruled,vlined]{algorithm2e}
\usepackage{mathtools}
\usepackage{adjustbox}
\usepackage{makecell}
\usepackage{empheq} 
\usepackage{xcolor}
\usepackage[amsmath,thmmarks]{ntheorem}
\usepackage{hyperref}
\usepackage{enumerate}
\hypersetup{pdfborder={0 0 0},}
\def\BibTeX{{\rm B\kern-.05em{\sc i\kern-.025em b}\kern-.08em
    T\kern-.1667em\inner.7ex\hbox{E}\kern-.125emX}}
 \theoremheaderfont{\normalfont\bfseries} 
\theorembodyfont{\normalfont\itshape}    
\theoremseparator{.}                     

\newtheorem{remark}{Remark}
\newtheorem{theorem}{Theorem}
\newtheorem{lemma}{Lemma}

\newtheorem{assumption}{Assumption}

\begin{document}

\title{\LARGE \bf CADMM-Prox: A Bi-level Consensus ADMM for Non-smooth Non-convex Distributed Consensus Optimization
}

\author{Xu Du,  Shuting Wu,  Karl~H.~Johansson, and Apostolos I. Rikos$^*$
\thanks{$^*$Corresponding author.}
	\thanks{Xu Du and Apostolos I. Rikos are with the Artificial Intelligence Thrust of the Information Hub, The Hong Kong University of Science and Technology (Guangzhou), Guangzhou, China. 
    Apostolos I. Rikos is also affiliated with the Department of Computer Science and Engineering, The Hong Kong University of Science and Technology, Clear Water Bay, Hong Kong, China. E-mails: {\tt~\{michaelxudu, apostolosr\}@hkust-gz.edu.cn}. 
            }
            \thanks{Shuting Wu is with School of Mathematics and Statistics, North China University of Water Resources and Electric Power, Zhengzhou, China.  E-mail: \texttt{wushuting0126@163.com}.}
            \thanks{Karl H.~Johansson is with the Division of Decision and Control Systems, KTH Royal Institute of Technology, SE-100 44 Stockholm, Sweden. 
    He is also affiliated with Digital Futures, SE-100 44 Stockholm, Sweden. 
    E-mail:{\tt~kallej@kth.se}.
            }
            \thanks{The work of X.D. and A.I.R. was supported by the Guangzhou-HKUST(GZ) Joint Funding Scheme (Grant No. 2025A03J3960). The work of A.I.R. was also supported by the Guangdong Provincial Project (Grant No. 2024QN11G109).}
}

\maketitle

\begin{abstract}
Non-smooth and non-convex optimization problems are pervasive in machine learning, control, and signal processing, due to the need for sparse solutions and the inherently non-convex nature of many objective functions. 
In this paper, we study non-smooth and non-convex distributed optimization problems. 
We propose a novel bi-level Consensus Alternating Direction Method of Multipliers (ADMM) algorithm, termed CADMM-Prox. 
The proposed algorithm integrates classical Consensus ADMM with a proximal mechanism by introducing a sufficiently large proximal term associated with an outer-level variable. 
Under the mild assumption that the local objective functions are semi-convex, CADMM-Prox is guaranteed to converge globally to a neighborhood of a generalized stationary point. 
Numerical experiments on a phase retrieval problem demonstrate that our proposed method exhibits more stable convergence behavior compared with baseline algorithm. 
\end{abstract}

\section{Introduction}
Distributed optimization has attracted significant attention in recent years due to its wide range of applications in control systems \cite{stomberg2025decentralized}, power networks \cite{dai2026distributed}, and machine learning \cite{zhou2026preconditioned}. 
These applications demonstrate that, with the aid of distributed optimization, large-scale problems can be handled in a scalable manner while mitigating the effect of the \emph{curse of dimensionality} \cite{2024_doostmohammadian_rikos_Johansson_survey}.

Distributed optimization algorithms are designed to solve finite-sum problems in which local decision variables are either affinely coupled or required to reach consensus. 
In this paper, we focus on distributed consensus optimization.
A typical distributed optimization algorithm consists of two fundamental steps: a local minimization step and a coordination step \cite{2024_doostmohammadian_rikos_Johansson_survey,boyd2011distributed}. 
Coordination among agents can be achieved either with or without a centralized coordinator. 
When agents exchange information directly without relying on a central coordinator, the resulting scheme is referred to as decentralized optimization \cite{ling2015dlm}. 
In contrast, this paper considers the distributed setting assuming the existence of a centralized coordinator. 
Additionally, distributed optimization methods are broadly categorized into two main classes \cite{2024_doostmohammadian_rikos_Johansson_survey}: 
(i) primal decomposition methods, and 
(ii) dual decomposition methods. 
This work focuses on the latter class, particularly the alternating direction method of multipliers (ADMM) \cite{boyd2011distributed}.

\noindent
\textbf{Existing Literature.}
ADMM was originally introduced by \cite{Gabay1976} and \cite{glowinski1975approximation}, and later popularized by \cite{boyd2011distributed} for a wide range of applications. 
To the best of our knowledge, a tailored ADMM framework for distributed consensus optimization was first systematically presented in \cite[Chapter~7]{boyd2011distributed}. 
The convergence theory of ADMM has since developed significantly. 
A milestone result in \cite{he20121} established sublinear convergence of two-block Gauss–Seidel (serial)  ADMM under convexity assumptions without requiring smoothness. 
This was later extended to multi-block convex problems with Jacobi type (parallel) updates \cite{du2026affine}.  
When additional smoothness assumptions are imposed, linear convergence rates can be achieved \cite{ling2015dlm}. 
However, these works suffer from at least one of three limitations: (i) they are confined to convex problems, (ii) they rely on smoothness, or (iii) they do not support Jacobi type updates among agents.
To concurrently tackle the first and third limitations, convergence guarantees for Jacobi-type ADMM \cite{Hong2016, zhang2020proximal} and other first-order method \cite{doostmohammadian2024nonlinear} have been established. However, their convergence hinges on parameter choices (e.g., the penalty parameter for ADMM and the step-size for gradient methods) that must be based on the Lipschitz constants.
To simultaneously address the first and second limitations, several works have made preliminary progress 
   \cite{wang2019global, el2025complexity}. 
Nevertheless, their theoretical guarantees still require at least one smooth component in the objective function, since the convergence analysis relies on a sufficiently decreasing augmented Lagrangian. This requires the smooth component to have a Lipschitz‑continuous gradient, which provides a quadratic descent term to compensate for fluctuations introduced by the non-smooth and non‑convex parts. 
In addition, their analysis requires agents to update their local variables in a Gauss–Seidel manner. 
Moreover, to overcome the first two limitations (convexity and smoothness), \cite{zeng2022moreau_published,zeng2021moreau} established the global convergence theory of the augmented Lagrangian method (ALM) for centralized non-convex and non-smooth problems by analyzing a lower bound on the augmented Lagrangian parameter. 
The work in \cite{zeng2021moreau} also presented a numerical example of an ADMM type method, however, without providing a rigorous convergence guarantee. 
In Table~\ref{table: ADMM} we summarize several representative ADMM type methods and the corresponding assumptions for their convergence. 
Additionally, we refer interested readers to \cite{yang2022survey} for more details. 
From the aforementioned works, it is evident that existing studies address at most one or two of the three limitations, namely convexity, smoothness, and Gauss–Seidel  updates. To the best of our knowledge, no work has yet simultaneously overcome all three while providing rigorous global convergence guarantees. As a result, under Jacobi type updates among agents, the study of ADMM type algorithms for non-smooth and non-convex problems remains an open challenge. 
\vspace{-.1cm} 
\begin{table}[ht]
		\caption{Assumptions of ADMM type Algorithms} \vspace{-.3cm}
		\label{table: ADMM}
		\begin{center}
       \scalebox{0.9}{\begin{tabular}{|c|c|c|}
				\hline
				Attribute
				&\makecell{Smooth}& \makecell{Non-smooth}\\
				\hline
				\makecell{Convex} &\makecell{ \cite{ling2015dlm}, \cite{jang2026alia}\\ \cite{du2025decentralized,Du2025ecc}}  &\makecell{\cite{he20121},\cite{deng2017parallel,du2026affine},\\\cite{rikos2023asynchronous,jiang2021asynchronous,wei2012distributed}} \\
				\hline
				\makecell{Non-convex}  &\makecell{\cite{Hong2016,zhang2020proximal,sun2023two}} &\makecell{\cite{wang2019global,el2025complexity} (Gauss-Seidel update),\\
                \textbf{CADMM-Prox} (Jacobi type update)} \\
				\hline
			\end{tabular}} 
		\end{center}
	\end{table}
    
\noindent
\textbf{Main Contributions.} Motivated by the aforementioned limitations in the existing literature, we introduce a novel bi-level Consensus ADMM framework. 
To the best of our knowledge, this is the first Consensus ADMM type method that simultaneously handles (i) non-smoothness without requiring smooth surrogates, (ii) non-convexity, (iii) does not require damping techniques (e.g., line search, trust regions, or projections), and (iv) Jacobi type updates across agents. 
Our main contributions are summarized as follows. \\
\noindent
\textbf{A.} 
We propose CADMM-Prox, a bi-level Consensus ADMM algorithm that decouples the challenges of non-smoothness and non-convexity (see Algorithm~\ref{alg: Globalization of ADMM}). 
Under a mild semi-convexity assumption, we construct a convex surrogate of the original non-convex and non-smooth consensus problem by introducing a sufficiently large proximal term on an outer-level variable. The inner level retains the classical Consensus ADMM structure to solve this convex surrogate, while the outer-level variable is updated according to a prescribed criterion \eqref{eq: criterion}. 
We establish the global convergence of CADMM-Prox (see Theorem~\ref{them: global convergence}). 
Under the assumption that each local objective function is closed, proper, and semi-convex, the proposed algorithm is guaranteed to converge to a neighborhood of a generalized stationary point of the non-smooth and non-convex distributed consensus optimization problem. \\
\noindent
\textbf{B.} 
We evaluate the performance of our proposed algorithm in a phase retrieval application, analyzing its numerical convergence stability. Additionally, we benchmark it against state-of-the-art algorithms from the literature, even though the latter lack rigorous global convergence guarantees. 
 
 \vspace{.2cm}
\textit{Notation.} In this paper, $(\cdot)^{[k]}$ denotes the value at the $k$-th iteration of the algorithm. 
The notation $\xi^\top$ represents the transpose of a vector $\xi \in \mathbb{R}^n$. The symbol $\|a\|$ stands for the Euclidean norm of a vector $a \in \mathbb{R}^n$. Furthermore, $|\mathcal{S}|$ indicates the cardinality of a countable set $\mathcal{S}$.

\section{Problem Formulation}\label{sec: Problem Formulation and Preliminaries}

We now review the concept of 
$\alpha$-semi-convex functions, which underpins the algorithm design. 
Then, we formulate the non-smooth 
$\alpha$-semi-convex distributed consensus optimization problem and introduce a convex surrogate. 

\subsection{ 
The $\alpha$-Semi-Convex Function
}\label{sec: L2}

This paper focuses on a class of non-smooth and non-convex problems, namely \emph{
$\alpha$-semi-convex} (also referred to as weakly convex or hypo-convex) \cite[Definition~10, Eq.~(18)]{semiconvex}, \cite{bohm2021variable}.  
Specifically, we assume the objective function $f:\mathbb{R}^n \rightarrow \mathbb{R}$ is non-smooth and globally satisfies
\begin{equation}\label{eq:semi-convex}\small
f(\xi) + d^\top (\zeta - \xi)
\leq f(\zeta) + \frac{\gamma}{2} \left\| \xi - \zeta \right\|^2,
\end{equation}
for all $\xi, \zeta \in \mathbb{R}^n$ and any sub-gradient $d \in \partial f(\xi)$. Here, $\alpha>0$ is a finite constant, which can be associated with an $L_2$ proximal term to ensure \eqref{eq:semi-convex} holds globally.  
We note that the related concept of \emph{prox-regularity} \cite{wang2019global} constitutes a weaker assumption than 
$\alpha$-semi-convexity, since prox-regular functions require \eqref{eq:semi-convex} to hold only locally, whereas 
$\alpha$-semi-convex functions satisfy it globally.  
For an 
$\alpha$-semi-convex function $f$ satisfying \eqref{eq:semi-convex}, adding an $L_2$ proximal term associated with 
with a regularization parameter $\gamma>\alpha$ yields the function
\begin{equation}\label{eq: convex}\small
F^{z}(x) \doteq f(x) + \frac{\gamma}{2} \left\| x - z \right\|^2, 
\end{equation}
which is $(\gamma-\alpha)$-strongly convex with respect to $x$
for every $z\in\mathbb{R}^n$, and hence strictly convex.

\subsection{Distributed Consensus Optimization Problem}\label{sec: Problem Formulation}
Let $\mathcal{V}$ denote the set of agents, where $N = |\mathcal{V}|$ represents the number of agents. 
Distributed optimization can be formulated as
\begin{equation}\label{eq: DC0}\small
\begin{aligned}
   \min_{y \in \mathbb R^{n}} \quad &  \sum_{i=1}^{N} f_i(y), 
\end{aligned}
\end{equation}
where $f_i(\cdot): \mathbb R^n \rightarrow \mathbb R$ denotes the local objective function of agent $i \in \mathcal V$, which is non-smooth and 
$\alpha$-semi-convex (i.e., it satisfies \eqref{eq:semi-convex}). 
To enable distributed computation, we reformulate \eqref{eq: DC0} by introducing local variables $x_i$ for each agent $i \in \mathcal V$.
In this case, \eqref{eq: DC0} becomes:  
\begin{equation}\label{eq: DC}\small
    \begin{aligned}
   \textbf{P1:} \quad     \min_{x_i,\, i \in \mathcal V, y} \quad & \sum_{i=1}^{N} f_i(x_i) \qquad
        \text{s.t.}\;\;\;  x_i = y, \quad \forall i \in \mathcal V.
    \end{aligned}
\end{equation}
Problem~\textbf{P1} is referred to as the distributed consensus optimization problem, with the constraints enforcing agreement among all local variables. 
Directly solving the original problem in \eqref{eq: DC} is challenging due to its non-smooth and non-convex nature. 
To overcome these difficulties, we introduce a sufficiently large $L_2$ proximal term, which convexifies each non-convex component $f_i(\cdot)$ under the 
$\alpha$-semi-convexity assumption. 
Consequently, we now introduce the following strongly convex surrogate problem:
\begin{equation}\label{eq: Convex DC}\small
    \begin{aligned}
    \textbf{P2:} \quad    \min_{x_i,\, i \in \mathcal V, y} \quad 
        & \sum_{i=1}^{N} F_i^{z}(x_i) \qquad
        \text{s.t.}  \;\;
         x_i = y, \quad \forall i \in \mathcal V,
    \end{aligned}
\end{equation}
where $F_i^{z}(x_i) \doteq f_i(x_i) + \frac{\gamma}{2} \left\| x_i - z \right\|^2$ as defined in \eqref{eq: convex}.
The convex surrogate problem in \eqref{eq: Convex DC} plays a central role in the algorithmic design developed in this paper. 
For notational convenience in the subsequent analysis, let us now define
\begin{equation}\label{eq: global function}\small 
\Phi(z, y) \doteq \sum_{i=1}^{N} F_i^{z}(y) , 
\end{equation}
for which it holds,
\begin{equation}\label{eq: global function2}\small
\Phi(z, z) = \sum_{i=1}^{N} f_i(z).
\end{equation} 
Equation \eqref{eq: global function2} reveals the relationship between \textbf{P1} and \textbf{P2}. 
When $y = z$, the objective function values of the two problems coincide. 
This fact will be instrumental in establishing the subsequent global convergence results. 



\section{Preliminaries on Consensus ADMM} \label{sec: CADMM}

The augmented Lagrangian function of \textbf{P1} 
can be formulated as
\begin{equation}\label{eq: AL}\small
    \mathcal L (x, y, \lambda) 
    \doteq \sum_{i=1}^N\left( 
    f_i(x_i) 
    + \lambda_i^\top (x_i - y ) 
    + \frac{\rho }{2}\left\|x_i - y\right\|^2
    \right), 
\end{equation}
where, $x = [x_1^\top, x_2^\top, \dots, x_N^\top ]^\top$ 
collects the local primal variables, and $\lambda = [\lambda_1^\top, \lambda_2^\top, \dots, \lambda_N^\top]^\top$
collects the dual variables.
Based on \eqref{eq: AL}, the consensus ADMM method \cite[Chapter~7]{boyd2011distributed} is given by
\begin{subequations}\label{eq: CADMM}\small
\begin{align}
x_i^{[t+1]} 
&= \mathop{\arg\min}_{x_i, 
\forall i \in \mathcal V, } 
\; f_i(x_i) 
+ \left(\lambda_i^{[t]}\right)^\top x_i 
+ \frac{\rho}{2}\left\|x_i - y^{[t]}\right\|^2,
\label{eq: local update} \\
y^{[t+1]} 
&= \frac{1}{N}
\sum_{i=1}^N
\left(
x_i^{[t+1]} + \frac{\lambda_i^{[t]}}{\rho}
\right), 
\label{eq: z update} \\
\lambda_i^{[t+1]} 
&= \lambda_i^{[t]} 
+ \rho\left(x_i^{[t+1]} - y^{[t+1]}\right), 
\label{eq: dual update}
\end{align}
\end{subequations}
for every agent $i \in \mathcal V$, with $t$ denoting the iteration index.
The algorithm consists of three main steps. 
In \eqref{eq: local update}, each agent updates its local primal variable $x_i^{[t+1]}$. 
In \eqref{eq: z update}, the global variable $y^{[t+1]}$ is updated. 
Finally, the dual variables $\lambda_i$ are updated according to \eqref{eq: dual update}. 

As shown in \cite{boyd2011distributed, rikos2023asynchronous}, when each $f_i(\cdot)$ is convex, the iterates generated by \eqref{eq: CADMM} converge globally. 
In contrast, for non-smooth  non-convex problems, global convergence results are significantly less developed and remain largely limited in the literature. 
Our proposed algorithm in this paper enables the solution of \eqref{eq: DC} when the objective functions are (i) non-smooth, (ii) 
$\alpha$-semi-convex, (iii) do not require damping techniques, and (iv) are updated across agents in a Jacobi type manner, all while guaranteeing global convergence without increasing the per-iteration computational complexity.

\section{CADMM-Prox}\label{sec: ALgorithm}

In this section, we introduce a novel distributed algorithm, for solving the non‑smooth and non‑convex problem \textbf{P1} in Section~\ref{sec: Problem Formulation}. 
Our algorithm is described below as Algorithm~\ref{alg: Globalization of ADMM} (CADMM-Prox). 
Before presenting our proposed algorithm, we make the following assumption which is fundamental for the subsequent development.



\begin{assumption}\label{ass} 
The local cost function $f_i$ of each agent $i\in\mathcal V$ is closed,
proper, $\alpha$-semi-convex, and bounded from below. Specifically, there
exists a finite constant $\alpha>0$ such that \eqref{eq:semi-convex} holds for each
$f_i(\cdot)$. Moreover, the regularization parameter $\gamma$ is chosen
such that $\alpha<\gamma<\infty$.
\end{assumption}

Assumption~\ref{ass} is necessary for designing our algorithm and for establishing its global convergence. 
The 
$\alpha$-semi-convexity ensures a finite 
$\gamma$ for constructing the strongly convex surrogate \eqref{eq: Convex DC}. 
As a consequence, the augmented problem in \eqref{eq: Convex DC} is guaranteed to be feasible, to satisfy strong duality, and can be solved by the inner level of Algorithm \ref{alg: Globalization of ADMM}.
Furthermore, boundedness from below supports the convergence analysis and guarantees finite-time termination.

\subsection{Algorithm Development}\label{sec: CADMM-Prox}

In this section, we present our proposed decentralized
algorithm, detailed below as Algorithm~\ref{alg: Globalization of ADMM} (CADMM-Prox).
			\begin{algorithm}[ht]\small
				\caption{CADMM-Prox: Consensus ADMM with Proximal Term}
				\label{alg: Globalization of ADMM}
				\textbf{Initialization:} Outer and inner level global variables $z^1, y^1\in \mathbb R^n$ and the dual $\lambda_i^1\in \mathbb R^n$. Set $
                \gamma>\alpha$ $\rho>0,$ $\beta\geq0$, $\epsilon>0$, $t=1$, $k=1$. \\
				\Repeat{  $\left\|{z}^{[k+1]} - {z}^{[k]}\right\| < \epsilon$ }{{(\textit{Outer Level})}\\
					$k \leftarrow k+1$\;
					\Repeat{   The following inequality is satisfied    }{(\textit{Inner Level})\\
						$t \leftarrow t+1$\;\begin{enumerate}
							\item 
							Update the local variable as $x_i^{[t+1]}$ with 
							\begin{equation*}\label{eq: local update ADMM}\small
								\hspace{-1.9cm}\begin{split}
					{x_i^{[t+1]}}=\mathop{\arg\min}_{x_i, i\in \mathcal V} &\hspace{0.1cm} F^{ z^{[k]}}_i(x_i)+\left(\lambda_i^{[t]}\right)^\top  x_i\\
			&+\frac{\rho}{2}\left\|x_i-y^{[t]}\right\|^2.
								\end{split}
							\end{equation*}
							
							\item  Update the inner level global variable $y$ with \\equation \eqref{eq: z update}.
							\vspace{0.1cm}
	\item Update the dual variables $\lambda_i$s with \eqref{eq: dual update}.

					\end{enumerate}}
                    \begin{equation}\label{eq: criterion}\small
\Phi\left(z^{[k]}, y^{[t+1]}\right) < \Phi\left(z^{[k]}, z^{[k]}\right).
\end{equation}
					\vspace{0.1cm}
                    4)\hspace{0.06cm} Set $ z^{[k+1]} = y^{[t+1]} $.
					\vspace{0.2cm}}
				\vspace{0.1cm}
				
				\textbf{Output:} Approximate optimal solution $ z^* $ of problem  \eqref{eq: DC}. 
			\end{algorithm}

Our proposed CADMM‑Prox (Algorithm~\ref{alg: Globalization of ADMM}) is a bi‑level distributed algorithm that solves the non‑smooth non‑convex problem~\textbf{P1} in \eqref{eq: DC} while enabling Jacobi‑type updates among agents. 
The bi‑level structure decouples the challenges of non‑smoothness and non‑convexity and consists of two layers, an inner, an outer. 
The inner level (steps 1–3) solves the non‑smooth convex surrogate problem \textbf{P2} in \eqref{eq: Convex DC} parameterized by the current outer level variable $z^{[k]}$. 
When the given criterion in \eqref{eq: criterion} is satisfied, the outer level (step 4) updates the outer variable as $z^{[k+1]} = y^{[t+1]}$.  
This point further decreases the merit function in \eqref{eq: global function2} and is used to construct the convex surrogate problem \textbf{P2} for the next inner iteration. The algorithm alternates between solving the convex surrogate \textbf{P2} at the inner level and updating $z^{[k]}$ at the outer level until the outer convergence condition $\left\|z^{[k+1]}-z^{[k]}\right\|<\epsilon$ is met. Through this alternating process, the convex surrogate \textbf{P2} effectively converges to the original problem \textbf{P1}.


In the inner level of Algorithm~\ref{alg: Globalization of ADMM}, the convexity of the distributed consensus problem \textbf{P2} in \eqref{eq: Convex DC} is guaranteed by Assumption~\ref{ass}. The problem can be solved using classical Consensus ADMM \eqref{eq: CADMM} \cite{wei2012distributed, khatana2020d}, which is guaranteed to converge globally.
The main difference between the inner level of Algorithm~\ref{alg: Globalization of ADMM} and classical Consensus ADMM \eqref{eq: CADMM} lies in the construction of the local augmented Lagrangian, which employs $F_i^{z^{[k]}}(\cdot)$ instead of $f_i(\cdot)$ (see step 1).
The global variable update in step 2) and the dual variable update in step 3) coincide with their counterparts in classical Consensus ADMM, given by \eqref{eq: z update} and \eqref{eq: dual update}, respectively, and are omitted here for brevity.

\begin{remark}
To avoid the prohibitive cost of an exact inner-level solve, we adopt a practical stopping criterion \eqref{eq: criterion}.
This mechanism ensures a strict decrease of the merit function $\Phi(\cdot,\cdot)$ in \eqref{eq: global function2} with respect to the outer-level variable $z$, thereby guaranteeing global convergence of Algorithm~\ref{alg: Globalization of ADMM}.
We note that Algorithm~\ref{alg: Globalization of ADMM} is likewise applicable to the remaining problem classes summarized in Table~\ref{table: ADMM}, but the corresponding analysis is omitted here for conciseness. 
\end{remark}


\noindent
\textbf{Comparison with Previous Works.} In terms of problem types and solution strategies for the distributed consensus optimization problem \eqref{eq: DC}, existing ADMM type methods only partially address this setting. 
For convex and non-smooth problems, convergence of distributed ADMM has been established in \cite{boyd2011distributed, rikos2023asynchronous, jiang2021asynchronous}. 
For smooth but non-convex objectives, convergence results can be found in \cite{Hong2016, sun2023two}. 
The works closest to our setting are \cite{wang2019global, el2025complexity}, which consider non-smooth and non-convex problems. 
However, these methods rely on Gauss–Seidel type updates and additionally require smooth components in the objective functions. 
Consequently, although the aforementioned works satisfy part of our requirements, none of them simultaneously accommodate non-smoothness, non-convexity, and Jacobi type distributed updates with global convergence guarantees. 
Regarding computational complexity, the inner level of Algorithm \ref{alg: Globalization of ADMM} does not increase the per-iteration cost compared to classical Consensus ADMM. 
This is because the local update step \eqref{eq: local update} already includes an augmented Lagrangian term of the same structure, and the added proximal term does not change the order of complexity. 
Furthermore, the outer level only performs an assignment update of the global variable without solving an optimization problem, making its computational cost negligible. 
In contrast, the bi-level ADMM proposed in \cite{sun2023two} requires optimization at both levels, resulting in significantly higher complexity than single-level ADMM methods \cite[IV.E.1]{yang2022survey}. 

\subsection{Convergence Analysis}\label{sec: Convergence}

In this section, we present the global convergence analysis of Algorithm \ref{alg: Globalization of ADMM}. 
We first introduce two lemmas that serve as the foundation for the subsequent convergence results.

\begin{lemma}\label{lemma: convex convergence} 
Let Assumption~\ref{ass} hold. Then, for every $z^{[k]}$, each local
function $F_i^{z^{[k]}}(\cdot)$ in \eqref{eq: Convex DC} is closed, proper, and
$(\gamma-\alpha)$-strongly convex, and hence strictly convex.
Consequently, problem \eqref{eq: Convex DC} is feasible and satisfies strong duality.
Under these conditions, the inner-level iterations of Algorithm~\ref{alg: Globalization of ADMM}
applied to \eqref{eq: Convex DC} converge globally with an $ O(\frac{1}{t})$ sublinear rate.
\end{lemma}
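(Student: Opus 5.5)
The plan is to verify the structural properties of $F_i^{z^{[k]}}$ directly from Assumption~\ref{ass}, then reduce the inner loop to classical two-block convex ADMM and invoke its known ergodic $O(1/t)$ rate \cite{he20121}.

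\emph{Closedness, properness, strong convexity.} Fix $z^{[k]}$. Adding the continuous finite quadratic $\frac{\gamma}{2}\|x-z^{[k]}\|^2$ to a closed proper $f_i$ keeps the sum closed and proper. For strong convexity, add $\frac{\gamma}{2}\|\zeta-z\|^2-\frac{\gamma}{2}\|\xi-z\|^2$ to both sides of \eqref{eq:semi-convex}, reading the modulus there as $\alpha$. The identity
\[
\|\zeta-z\|^2-\|\xi-z\|^2=2(\xi-z)^\top(\zeta-\xi)+\|\zeta-\xi\|^2
\]
then gives, for every $g=d+\gamma(\xi-z)\in\partial F_i^{z}(\xi)$,
\[
F_i^{z}(\zeta)\ge F_i^{z}(\xi)+g^\top(\zeta-\xi)+\frac{\gamma-\alpha}{2}\|\zeta-\xi\|^2 .
\]
This is the subgradient characterization of $(\gamma-\alpha)$-strong convexity, and $\gamma>\alpha$ makes it strict. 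One point needs care: the subdifferential sum rule $\partial F=\partial f+\gamma(\cdot-z)$. It holds because the quadratic is smooth.

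\emph{Feasibility and strong duality.} \textbf{P2} has only the linear constraints $x_i=y$, so it is feasible, for example with $x_i=y=z^{[k]}$, at which each $f_i$ is assumed finite. The objective $\sum_i F_i^{z}(x_i)$ is strongly convex in $x$, and the constraints force $y$ to coincide with the $x_i$. Together with closedness this makes the objective coercive on the feasible set, so a unique minimizer exists. All constraints are affine and the domain is all of $\mathbb R^n$, so Slater's condition in its refined (affine) form holds. Convex duality then gives zero duality gap and attainment of a dual optimal $\lambda^\star$, hence a KKT triple $(x^\star,y^\star,\lambda^\star)$.

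\emph{Convergence rate.} View the inner loop as two-block ADMM on $\min\, G(x)+H(y)$ subject to $x-\mathbf 1\otimes y=0$, with $G(x)=\sum_i F_i^{z^{[k]}}(x_i)$ and $H\equiv0$. Steps 1–3 are exactly the Gauss–Seidel ADMM updates for this splitting: the $x$-block separates across agents, and the $y$-minimization yields \eqref{eq: z update}. The Jacobi structure is only among agents inside the $x$-block, which is separable, so the classical two-block theory applies. Both blocks are closed, proper and convex, and a KKT point exists, so \cite{he20121} gives global convergence of the iterates. It also gives an $O(1/t)$ ergodic rate in the variational-inequality (optimality and feasibility gap) sense. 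The alternative is the standard Lyapunov argument with the quantity $\rho\|y^{[t]}-y^\star\|^2+\frac1\rho\|\lambda^{[t]}-\lambda^\star\|^2$, which is nonincreasing and summable in successive differences.

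\emph{Main obstacle.} The delicate step is stating which quantity decays as $O(1/t)$, since the iterates themselves need not. I would state it for the ergodic averages, or for $\|w^{[t]}-w^{[t+1]}\|^2$ via the monotonicity result of He and Yuan. I would also remark that strong convexity of $G$ additionally yields convergence of $x^{[t]}$ to the unique $x^\star$.
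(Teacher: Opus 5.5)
Your proposal is correct and follows the paper's route. Semi-convexity plus the $\frac{\gamma}{2}\|x-z^{[k]}\|^2$ term yields $(\gamma-\alpha)$-strong convexity, after which the inner loop is recognized as classical convex consensus ADMM and its $O(1/t)$ rate is taken from standard theory. You make explicit what the paper only asserts: the subgradient computation, the existence of a KKT point via the refined Slater condition, and the two-block reformulation that lets you invoke \cite{he20121} (the paper cites the consensus-specific results \cite{wei2012distributed, khatana2020d} instead); you are also right to specify that the rate is ergodic or stated in terms of successive differences.
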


\emph{Proof.} 
Since each $f_i$ is $\alpha$-semi-convex and $\gamma>\alpha$,
the regularized function $F_i^{z^{[k]}}(\cdot)$ is
$(\gamma-\alpha)$-strongly convex for every $z^{[k]}$, and hence
strictly convex. Since the inner-level subroutine of Algorithm~\ref{alg: Globalization of ADMM} coincides with the classical Consensus ADMM scheme in \eqref{eq: CADMM}, its globally sublinear convergence follows directly from the standard theory for convex consensus problems \cite{wei2012distributed, khatana2020d}.
\hfill$\blacksquare$

Lemma~\ref{lemma: convex convergence} provides the convergence guarantee for the inner-level iterations. The inner loop is terminated by the stopping criterion in \eqref{eq: criterion}. This criterion is triggered before the convex surrogate problem converges exactly. The $O(1/t)$ convergence rate ensures that the stopping criterion is satisfied within a finite number of iterations. The following lemma quantifies the corresponding decrease of the global merit function.


\begin{lemma}\label{lemma: descent lemma}
Suppose that Assumption~\ref{ass} holds for problem~\eqref{eq: DC}, and that the conditions of Lemma~\ref{lemma: convex convergence} are satisfied. 
Let the stopping criterion \eqref{eq: criterion} be triggered at iteration $(t+1)$ of the inner level of Algorithm~\ref{alg: Globalization of ADMM}. 
Then, the following inequality holds:
\begin{equation}\label{eq: descent}\small
    \Phi\left(y^{[t+1]},y^{[t+1]}\right) < \Phi\left(z^{[k]},z^{[k]}\right) - \frac{\gamma N}{2} \left\|y^{[t+1]} - z^{[k]}\right\|^2.
\end{equation}
\end{lemma}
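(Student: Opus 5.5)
This is a direct expansion of the merit function $\Phi$. By definition \eqref{eq: global function},
\begin{equation*}\small
\Phi\left(z^{[k]}, y^{[t+1]}\right) = \sum_{i=1}^N f_i\left(y^{[t+1]}\right) + \frac{\gamma N}{2}\left\|y^{[t+1]}-z^{[k]}\right\|^2 .
\end{equation*}
By \eqref{eq: global function2}, the first sum equals $\Phi\left(y^{[t+1]}, y^{[t+1]}\right)$. Hence we have the exact identity
\begin{equation*}\small
\Phi\left(y^{[t+1]},y^{[t+1]}\right) = \Phi\left(z^{[k]}, y^{[t+1]}\right) - \frac{\gamma N}{2}\left\|y^{[t+1]}-z^{[k]}\right\|^2 .
\end{equation*}

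Next, I use the assumption that the stopping criterion \eqref{eq: criterion} is triggered at inner iteration $t+1$. This gives the strict inequality $\Phi\left(z^{[k]}, y^{[t+1]}\right) < \Phi\left(z^{[k]}, z^{[k]}\right)$. Substituting it into the identity immediately yields \eqref{eq: descent}.

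The role of Lemma~\ref{lemma: convex convergence} is only to make the hypothesis meaningful. The inner ADMM iterates converge, so the point $y^{[t+1]}$ at which \eqref{eq: criterion} holds is well defined. The precise justification that the trigger occurs in finite time is the step the statement takes as a hypothesis, not something we must prove here. If I wanted to address it, I would argue as follows. Let $y^\star$ be the unique minimizer of the strongly convex $\Phi\left(z^{[k]},\cdot\right)$. Then $\Phi\left(z^{[k]},y^\star\right) \le \Phi\left(z^{[k]},z^{[k]}\right)$, with strict inequality unless $z^{[k]}$ is already the minimizer, which is a stationary case. Convergence of $y^{[t]}$ to $y^\star$ under Lemma~\ref{lemma: convex convergence} would then eventually trigger the criterion. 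One would still need lower semicontinuity arguments for the nonsmooth $f_i$, and this is the only potentially delicate point.

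Beyond this, no real obstacle arises. The only care needed is bookkeeping: the $N$ copies of the proximal term produce the factor $\gamma N/2$, and the strictness of the inequality comes from \eqref{eq: criterion}.
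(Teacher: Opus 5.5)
Your proof is correct and is essentially the paper's argument. The paper likewise expands $\Phi(z^{[k]},y^{[t+1]})$ as $\sum_i f_i(y^{[t+1]})+\frac{\gamma N}{2}\|y^{[t+1]}-z^{[k]}\|^2$, identifies the first sum with $\Phi(y^{[t+1]},y^{[t+1]})$ via \eqref{eq: global function2} (phrased as adding the zero term $\frac{\gamma}{2}\|y^{[t+1]}-y^{[t+1]}\|^2$), and then applies the strict inequality \eqref{eq: criterion}. In your optional aside on finite-time triggering, the property actually needed is upper semicontinuity rather than lower, and it holds automatically because each $F_i^{z^{[k]}}$ is a finite-valued convex function on $\mathbb{R}^n$ and hence continuous.
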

\emph{Proof.} See Appendix \ref{app: descent}.
 \hfill$\blacksquare$


Lemma~\ref{lemma: descent lemma} quantifies the improvement achieved after the inner level of Algorithm \ref{alg: Globalization of ADMM} terminates. 
This result also reveals, from another perspective, that even for convex problems, ADMM does not necessarily guarantee monotonic convergence of the objective value. 
Such non-monotonic behavior has been numerically illustrated in 
\cite{boyd2011distributed},
\cite{giesen2019combining,wang2023maximally},  
and \cite{dai2026distributed}.
From another perspective, \cite[Example~2.1]{Houska2016} shows that for non-convex problems, enforcing local strong convexity of subproblems via an augmented Lagrangian term alone does not guarantee global convergence of ALM type methods. 


Now we are ready to present the global convergence theory of Algorithm~\ref{alg: Globalization of ADMM} (CADMM-Prox) via the following theorem.

\begin{theorem}\label{them: global convergence}
Let Assumption \ref{ass} hold. 
Then, the iterates of the outer-level variable $z^{[k]}$ generated by Algorithm~\ref{alg: Globalization of ADMM} (CADMM-Prox) satisfy
\begin{equation}\label{eq: convergence}\small
   \lim_{k \to \infty} \left\|z^{[k+1]} - z^{[k]}\right\| = 0.
\end{equation}
\end{theorem}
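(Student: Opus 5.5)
The plan is a standard sufficient-decrease-plus-telescoping argument built on Lemma~\ref{lemma: descent lemma}.

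\textbf{Step 1: the algorithm is well defined.} For each outer index $k$, the inner loop must terminate in finitely many iterations. This is the step I expect to be the main obstacle, because the claim is delicate.

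By Lemma~\ref{lemma: convex convergence}, the inner iterates $y^{[t]}$ converge to the unique minimizer $y^\star_k$ of the strongly convex surrogate \textbf{P2} with parameter $z^{[k]}$. Here $y\mapsto\Phi(z^{[k]},y)$ is exactly the consensus objective of \textbf{P2}, so
\begin{equation*}
\Phi(z^{[k]},y^\star_k)\le \Phi(z^{[k]},z^{[k]}).
\end{equation*}
If $z^{[k]}\neq y^\star_k$, this inequality is strict, by uniqueness of the minimizer. Convergence of $y^{[t]}$ to $y^\star_k$, together with lower semicontinuity, does not by itself give $\Phi(z^{[k]},y^{[t+1]})<\Phi(z^{[k]},z^{[k]})$ for large $t$. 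I would therefore invoke the $O(1/t)$ rate of Lemma~\ref{lemma: convex convergence}, interpreted as convergence of the objective values $\Phi(z^{[k]},y^{[t]})\to\Phi(z^{[k]},y^\star_k)$. This yields strict satisfaction of \eqref{eq: criterion} after finitely many inner steps.

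In the degenerate case $z^{[k]}=y^\star_k$, the point $z^{[k]}$ is already stationary for \textbf{P1}, since it is a fixed point of the proximal map. In that case I would argue that the iteration has effectively stopped, with $z^{[k+1]}-z^{[k]}\to 0$ trivially, or treat it as termination.

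\textbf{Step 2: one-step descent.} Since step 4) sets $z^{[k+1]}=y^{[t+1]}$ at the triggering inner iteration, Lemma~\ref{lemma: descent lemma} gives, for every $k$,
\begin{equation*}
\Phi(z^{[k+1]},z^{[k+1]})<\Phi(z^{[k]},z^{[k]})-\frac{\gamma N}{2}\|z^{[k+1]}-z^{[k]}\|^2 .
\end{equation*}
By \eqref{eq: global function2}, this says the merit sequence $\varphi_k\doteq\sum_{i=1}^N f_i(z^{[k]})$ is strictly decreasing, with a sufficient-decrease term.

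\textbf{Step 3: telescoping.} Summing the inequality from $k=1$ to $K$ gives
\begin{equation*}
\frac{\gamma N}{2}\sum_{k=1}^{K}\|z^{[k+1]}-z^{[k]}\|^2 \le \varphi_1-\varphi_{K+1}\le \varphi_1-\underline{f},
\end{equation*}
where $\underline f>-\infty$ is a lower bound for $\sum_i f_i$, which exists by Assumption~\ref{ass}. Properness ensures $\varphi_1<\infty$ whenever $z^{[1]}$ lies in the domain; I would assume this of the initialization.

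Letting $K\to\infty$, the series $\sum_k\|z^{[k+1]}-z^{[k]}\|^2$ converges because $\gamma N>0$. Hence its terms tend to zero, which is \eqref{eq: convergence}.

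As a by-product, the telescoped bound yields $\min_{k\le K}\|z^{[k+1]}-z^{[k]}\|^2\le 2(\varphi_1-\underline f)/(\gamma N K)$. This shows the outer stopping test $\|z^{[k+1]}-z^{[k]}\|<\epsilon$ is met after finitely many outer iterations.
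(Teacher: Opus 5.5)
Your proposal is correct, and its core (Steps~2--3: the sufficient-decrease inequality of Lemma~\ref{lemma: descent lemma} with $z^{[k+1]}=y^{[t+1]}$, telescoping, and the lower bound on $\sum_i f_i$) is exactly the paper's proof. Your Step~1 is extra: it spells out a well-posedness argument that the paper only asserts informally after Lemma~\ref{lemma: convex convergence}. Your observation that the inner loop never terminates when $z^{[k]}=y^\star_k$ is a genuine caveat the paper omits. There you can also replace the loose appeal to the $O(1/t)$ rate with a simpler fact: each $F_i^{z^{[k]}}$ is a finite-valued convex function on $\mathbb{R}^n$, hence continuous, so $y^{[t]}\to y^\star_k$ already gives $\Phi(z^{[k]},y^{[t]})\to\Phi(z^{[k]},y^\star_k)<\Phi(z^{[k]},z^{[k]})$ whenever $z^{[k]}\neq y^\star_k$.
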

\emph{Proof.} See Appendix \ref{app: convergence}.
 \hfill$\blacksquare$


\begin{remark}
Theorem~\ref{them: global convergence} establishes that Algorithm~\ref{alg: Globalization of ADMM} admits a limit point \(z^*\). This limit point lies in a neighborhood of a generalized stationary point of problem~\eqref{eq: DC}. This result is jointly determined by the inner stopping criterion \eqref{eq: criterion} and the outer termination condition \(\left\|z^{[k+1]} - z^{[k]}\right\| < \epsilon\).
Given the limiting outer variable \(z^*\), the inner-level variables \(x_i\) and \(y\) converge under the associated convex approximation, and thus remain within a neighborhood of a generalized stationary point of \textbf{P1} \cite{bolte2014proximal}. This indicates that, at convergence, the convex surrogate \textbf{P2} has effectively converged to the original problem \textbf{P1}.
\end{remark}

\section{Numerical Experiments on the Phase Retrieval Problem}\label{sec: numerical}
In this section, we illustrate the numerical performance of our Algorithm~\ref{alg: Globalization of ADMM} (CADMM-Prox) 
by applying it to the phase retrieval problem \cite{davis2019stochastic}. 
In the phase retrieval problem, a set of agents collaborate to recover a common signal from the magnitude of its linear measurements. This problem, which finds broad applications in imaging, optics, and signal processing, is verified to satisfy Assumption~\ref{ass}, see \cite{davis2019stochastic}.

For the phase retrieval problem, focusing on \textbf{P1}, the local objective function of each agent $i$ is defined as
\begin{equation}\label{eq: phase retrieval}\small
f_i(x_i) = \left| \left(a_i^\top x_i\right)^2 - b_i \right|.
\end{equation}
In our implementation, we set $N = 50$ agents and $n = 5$ decision variables per agent. The vectors $a_i \in \mathbb{R}^5$ are drawn independently and identically distributed (i.i.d.) from a normal distribution. The noise terms $w_i$ follow the same distribution, and the scalars $b_i = (a_i^\top y^*)^2 + w_i \in \mathbb{R}_+$ are constructed accordingly, where $y^*$ denotes a preset ground truth signal. 
This setup simulates a generic distributed phase retrieval instance. 
For further details, we refer interested readers to \cite{zeng2021moreau}.
It is known that problem \eqref{eq: phase retrieval} is weakly convex (i.e., semi-convex). 
To guarantee the convexity of the inner-level distributed optimization problem \eqref{eq: Convex DC}, 
we set $\gamma = 3 \max_i \|a_i\|^2$
in Algorithm~\ref{alg: Globalization of ADMM} (CADMM-Prox). 
The augmented Lagrangian parameter is chosen as $\rho = 10^3$.

\begin{figure}[ht]
\centering
\includegraphics[width=0.4\textwidth,height=0.25\textheight]{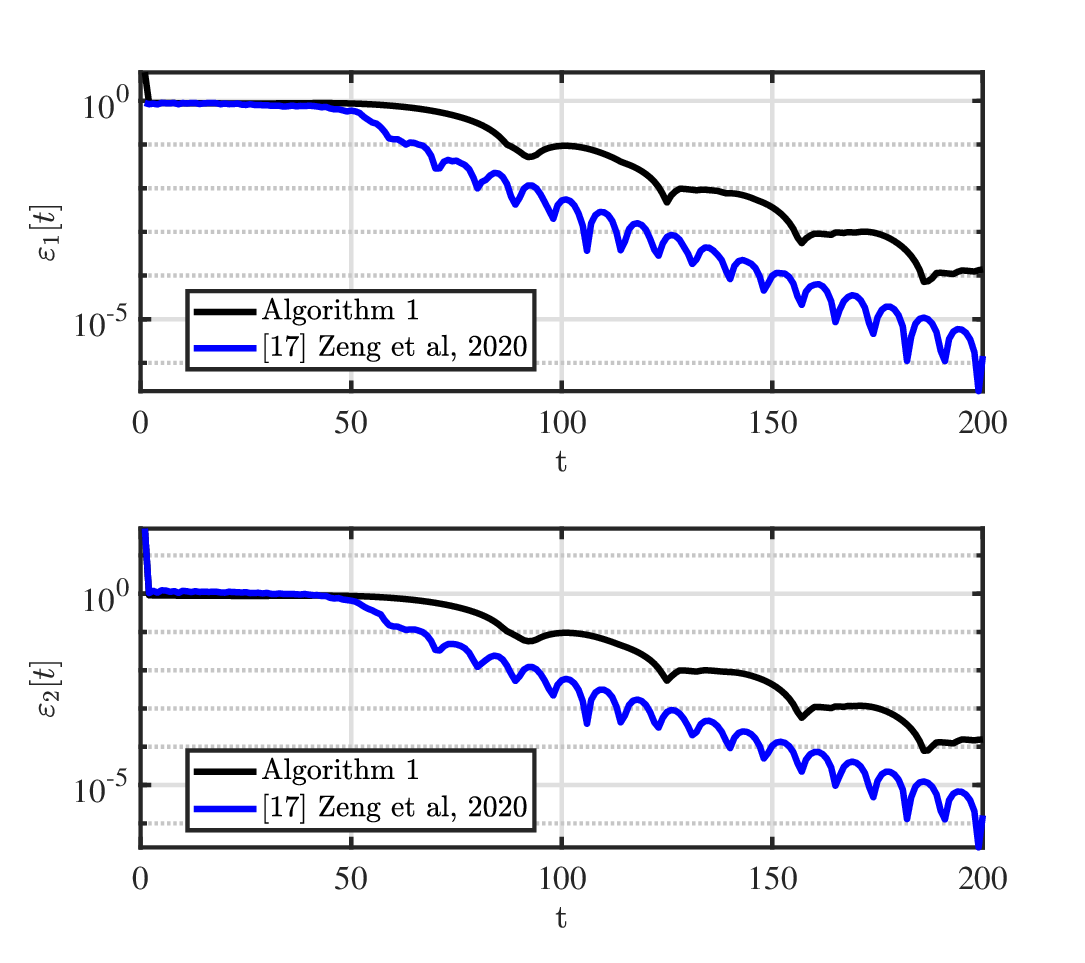}
\caption{Comparison of Algorithm~\ref{alg: Globalization of ADMM} and Consensus ADMM \cite[Section~7.3]{zeng2021moreau} on the phase retrieval problem \eqref{eq: phase retrieval}.}
\label{fig: comparison}
\end{figure}

In Fig.~\ref{fig: comparison} we compare the proposed Algorithm~\ref{alg: Globalization of ADMM} (CADMM-Prox) with the Jacobi type Consensus ADMM algorithm in \cite[Section~7.3]{zeng2021moreau}. 
We evaluate the algorithms using two performance metrics:
\begin{equation}\small
  \varepsilon_1[t] = \left \|y^{[t]} - y^*\right\|_\infty,\quad \varepsilon_2[t] = \frac{1}{N}\sum_{i=1}^N \left\|x_i^{[t]} - y^*\right\|_\infty.
\end{equation}
It is important to note here that to the best of the authors' knowledge, \cite[Section~7.3]{zeng2021moreau} provides a rare numerical example that directly applies Jacobi type updates across agents to solve a non-smooth, non-convex problem, without employing any smoothing or approximation techniques, and without relying on any damping mechanisms such as line search, diminishing step size or projections.
This setting is fully consistent with the structural requirements considered in this paper. 
However, as pointed out in \cite{wang2019global}, such an approach does not admit a theoretical convergence guarantee. 
We observe that although the method in \cite[Section~7.3]{zeng2021moreau} exhibits faster numerical convergence in the absence of theoretical guarantees, it also shows frequent oscillatory behavior during the iterations. 
In contrast, Algorithm~\ref{alg: Globalization of ADMM} (CADMM-Prox) demonstrates significantly reduced oscillations while being supported by rigorous global convergence theory.  
Reduced oscillations are critical for practical deployment, as they facilitate a more stable convergence process and lead to more reliable numerical solutions.

\section{Conclusion}

This paper proposed CADMM-Prox, a novel bi-level Consensus ADMM framework for solving non-smooth  non-convex distributed consensus optimization problems under Jacobi type updates. The bi-level structure decouples the challenges of non-smoothness and non-convexity. Under the 
$\alpha$-semi-convexity assumption, the inner-level problem is rendered convex by adding a sufficiently large proximal term with respect to the outer variable. As the outer variable is updated, this proximal term gradually vanishes, thereby recovering the original problem. This construction allows global convergence to be established via classical Consensus ADMM theory. 
Numerical experiments on a distributed phase retrieval problem illustrate stable convergence and competitive performance. 
Future work will focus on extending the proposed framework to distributed resource allocation problems and developing fully decentralized implementations over networked multi-agent systems.



\appendices
   \section{Proof of Lemma \ref{lemma: descent lemma} }\label{app: descent}
From the definition in \eqref{eq: global function}, when the stopping criterion \eqref{eq: criterion} is satisfied, we have
\begin{equation}\label{eq: monotone app1}\small
    \begin{split}
        \Phi\left(z^{[k]},z^{[k]}\right) \overset{\eqref{eq: criterion}}{>}& \Phi\left(z^{[k]}, y^{[t+1]}\right)
\\
\overset{\eqref{eq: global function},\eqref{eq: convex}}{=}&  \sum_{i=1}^N\left( f_i\left(y^{[t+1]}\right) + \frac{\gamma}{2}\left\| y^{[t+1]}- z^{[k]}\right\|^2 \right). \\
    \end{split}
\end{equation}
By adding the zero term $\frac{\gamma}{2}\left\| y^{[t+1]}- y^{[t+1]}\right\|^2$ to \eqref{eq: monotone app1}, we obtain
\begin{equation}\small
    \begin{split}
& \sum_{i=1}^N \left(f_i\left(y^{[t+1]}\right) + \frac{\gamma}{2}\left\| y^{[t+1]}- y^{[t+1]}\right\|^2\right)\\
&+ \frac{\gamma N}{2}\left\| y^{[t+1]}- z^{[k]}\right\|^2\\
\overset{\eqref{eq: global function2}}{=}& \Phi\left(y^{[t+1]},y^{[t+1]}\right) + \frac{\gamma N}{2}\left\| y^{[t+1]}- z^{[k]}\right\|^2. 
    \end{split}
\end{equation}
Inequality \eqref{eq: descent} then follows.

\section{Proof of Theorem \ref{them: global convergence}} \label{app: convergence}

From the outer-level update of Algorithm \ref{alg: Globalization of ADMM}, we have $z^{[k+1]} = y^{[t+1]}.$
From Lemma \ref{lemma: descent lemma}, it follows that
\begin{equation}\label{eq: descent_step}\small
\begin{split}
    &\left\|z^{[k+1]} - z^{[k]}\right\|^2 
    \overset{\eqref{eq: descent}}{<} \frac{2}{\gamma N} \left(\Phi\left(z^{[k]},z^{[k]}\right) - \Phi\left(z^{[k+1]},z^{[k+1]}\right)\right).
\end{split}
\end{equation}
Summing over outer-level iterations from $k=1$ to infinite, we obtain
\begin{equation}\label{eq: descent_sum}\small
\begin{split}
    &\sum_{k=1}^{\infty} \left\|z^{[k+1]} - z^{[k]}\right\|^2 \\
    \overset{\eqref{eq: descent_step}}{<} &\lim_{K\to \infty }\frac{2} {\gamma N} \Big(\Phi(z^{[1]},z^{[1]}) - \Phi(z^{[K+1]},z^{[K+1]})\Big).
\end{split}
\end{equation}
Since \(\Phi(\cdot,\cdot)\) is bounded below by Assumption \ref{ass}, and both \(\Phi(z^{[1]},z^{[1]})\) and \(\lim_{K \to \infty} \Phi(z^{[K+1]},z^{[K+1]})\) are finite, the right-hand side of \eqref{eq: descent_sum} is finite. Therefore, \(\sum_{k=1}^{\infty} \left\|z^{[k+1]} - z^{[k]}\right\|^2\) converges, which implies equation \eqref{eq: convergence}. This establishes the global convergence of the outer-level iterates.

\bibliographystyle{IEEEtran}   
\bibliography{references}      

\end{document}